\theoremstyle{plain}
\newtheorem{thm}{Theorem}[section]
\newtheorem{prop}[thm]{Proposition}
\theoremstyle{definition}
\newtheorem{rem}[thm]{Remark}
\def \r{\mbox{${\mathbb R}$}}
\def \s{\mbox{${\mathbb S}$}}
\DeclareMathOperator{\grad}{grad}
\DeclareMathOperator{\trace}{trace}
\DeclareMathOperator{\Ricci}{Ric}
\begin{document}
	
\title{Totally biharmonic hypersurfaces in space forms and 3-dimensional BCV spaces}

\author{S. Montaldo}
\address{Universit\`a degli Studi di Cagliari\\
Dipartimento di Matematica e Informatica\\
Via Ospedale 72\\
09124 Cagliari, Italia}
\email{montaldo@unica.it}

\author{A. P\'ampano}
\address{Department of Mathematics\\
University of the Basque Country\\
Aptdo. 644\\
48080, Bilbao, Spain.}
\email{alvaro.pampano@ehu.eus}
\date{\today}

\begin{abstract}
A hypersurface is said to be totally biharmonic if all its geodesics are biharmonic curves in the ambient space. We prove that a totally biharmonic hypersurface into a space form is an isoparametric biharmonic hypersurface, which allows us to give the full classification of totally biharmonic hypersurfaces in these spaces. \\
Moreover, restricting ourselves to the 3-dimensional case, we show that totally biharmonic surfaces into Bianchi-Cartan-Vranceanu spaces are isoparametric surfaces and we give their full classification. In particular, we show that, leaving aside surfaces in the 3-dimensional sphere, the only non-trivial example of a totally biharmonic surface appears in the product space $\mathbb{S}^2(\rho)\times \mathbb{R}$.\\
\end{abstract}

\thanks{Work partially supported by Fondazione di Sardegna (Project STAGE) and Regione Autonoma della Sardegna (Project KASBA). The second author has been partially supported by MINECO-FEDER grant PGC2018-098409-B-100, Gobierno Vasco grant IT1094-16 and Programa Posdoctoral del Gobierno Vasco, 2018. He also wants to thank the Department of Mathematics and Computer Science of the University of Cagliari for the warm hospitality during his stay.}
\keywords{BCV Spaces, Biharmonic Curves, Biharmonic Hypersurfaces,  Totally Biharmonic Hypersurfaces.}
\maketitle

\section{Introduction}

A hypersurface $M^{n-1}$ into a $n$-dimensional Riemannian manifold $N^n$ is called \emph{totally biharmonic} if all the geodesics of $M^{n-1}$ are biharmonic curves of $N^n$.
The notion of totally biharmonic hypersurfaces was introduced in \cite{IM}, where the authors characterized these hypersurfaces in terms of their extrinsic geometry. More precisely, in \cite{IM}, it was proved that totally biharmonic hypersurfaces can be characterized as solutions of a system of equations involving the shape operator, its derivatives and the sectional curvature of the ambient space.  We recall this characterization in Proposition \ref{condition}.

For the purpose of this paper we are going to say that a hypersurface $M^{n-1}$  into $N^n$ is a \emph{biharmonic hypersurface} if the following system is verified
\begin{equation*}
\begin{dcases}
\Delta H+H \lvert S_\eta \rvert^2-H \Ricci (\eta,\eta)=0\\
2S_\eta(\grad H)+ (n-1) H \grad H-2 H \Ricci (\eta)^T=0\,,
\end{dcases}
\end{equation*}
where $S_\eta$ is the shape operator, $H$ is the mean curvature function and $\Ricci(\eta)^T$ is the tangent component of the Ricci curvature of $N^n$ in the direction of the vector field $\eta$. 

To state the first result of the paper we recall that a hypersurface into a space form $N^n=N^n(\rho)$, of constant sectional curvature $\rho$, is \emph{isoparametric} if the principal curvatures are constant (\cite{Ca1}). We then have the following characterization. 

\begin{thm}\label{TB=IB} Let $M^{n-1}$  be a totally biharmonic hypersurface into a space form $N^n=N^n(\rho)$ with constant sectional curvature $\rho$. Then $M^{n-1}$ is an isoparametric biharmonic hypersurface.
\end{thm}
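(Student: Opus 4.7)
The plan is to start from the characterization of totally biharmonic hypersurfaces recalled in Proposition \ref{condition}, which expresses the defining condition as a pointwise algebraic-differential system on the shape operator $S_\eta$, its covariant derivative $\nabla S_\eta$, and the ambient sectional curvature. In the space-form setting this system simplifies substantially, because the sectional curvature is the constant $\rho$ in every 2-plane, so the curvature coefficients become scalar constants and the Codazzi equation reduces to the fully symmetric identity $(\nabla_X S_\eta)(Y)=(\nabla_Y S_\eta)(X)$.

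Next, I would analyse the simplified system at an arbitrary point $p\in M^{n-1}$ in a principal frame $\{e_1,\dots,e_{n-1}\}$ of $S_\eta$ with eigenvalues $k_1,\dots,k_{n-1}$. Since the totally biharmonic condition must hold for \emph{every} unit tangent $T$, one can test it first on each $T=e_i$ and then on linear combinations $T=\alpha e_i+\beta e_j$, and polarize in the parameters $\alpha,\beta$. The tangential component of the system, combined with Codazzi symmetry, forces vanishing of specific off-diagonal components of $\nabla S_\eta$, while the normal component controls the diagonal ones. Together these identities yield $e_j(k_i)=0$ for all $i,j$, i.e.\ each principal curvature is locally constant, so $M^{n-1}$ is isoparametric in the sense of Cartan.

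Once the $k_i$ are known to be constant, $H=\tfrac{1}{n-1}\sum_i k_i$ and $|S_\eta|^2=\sum_i k_i^2$ are constants as well. In a space form, $\Ricci(\eta,\eta)=(n-1)\rho$ and $\Ricci(\eta)^T=0$, so the second biharmonic hypersurface equation is automatic (both sides vanish), and the first one reduces to the scalar identity $H\bigl(|S_\eta|^2-(n-1)\rho\bigr)=0$. I would verify this by tracing the normal component of the totally biharmonic system over the principal frame $\{e_i\}$: the trace collapses the quadratic form $\langle S_\eta^2 T,T\rangle$ into $|S_\eta|^2$ and yields exactly the needed relation, after which biharmonicity of $M^{n-1}$ as a hypersurface follows.

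The main obstacle is expected to be the polarization step: extracting constancy of the full set of principal curvatures from a family of conditions parametrized by all unit $T\in T_pM$. Careful bookkeeping is required to handle the degenerate cases where some $k_i$ vanish or coincide, since in those regimes the tangential identity loses information; testing on sufficiently many independent directions $T$ and exploiting the symmetry of $\nabla S_\eta$ should nevertheless recover all independent relations needed to conclude that $M^{n-1}$ is isoparametric, and thereby biharmonic.
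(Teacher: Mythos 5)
Your final step (biharmonicity once the principal curvatures are constant) is essentially the paper's argument: tracing \eqref{s2} over a principal frame gives $\lvert S_\eta\rvert^2=\sum_i\lambda_i^2=\sum_i K^N(e_i,\eta)=\Ricci(\eta,\eta)=(n-1)\rho$, so the normal equation of \eqref{eq:def-bih} holds, while $\grad H=0$ and $\Ricci(\eta)^T=0$ make the tangential equation automatic. The genuine gap is in your route to isoparametricity. You propose to extract $e_j(k_i)=0$ from the derivative condition \eqref{s3} via polarization and Codazzi, and you yourself flag the degenerate cases (vanishing or coinciding $k_i$) as an unresolved obstacle; as written, that step is a program rather than a proof. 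It is also unnecessary: in a space form, \eqref{s2} is already a pointwise \emph{algebraic} condition, $\lvert S_\eta(X)\rvert^2=K^N(X,\eta)=\rho$ for every unit tangent $X$, i.e.\ $S_\eta^2=\rho\,\mathrm{Id}$ on the non-totally-geodesic branch of Proposition \ref{condition}. Hence every principal curvature satisfies $\lambda_i^2=\rho$, so each $\lambda_i$ equals $\pm\sqrt{\rho}$ (in particular $\rho>0$, the sign being locally constant by continuity, and $\lambda_i=0$ being excluded by \eqref{s1}). Isoparametricity and constancy of $H$ are immediate, no information on $\nabla S_\eta$ is needed, and your ``main obstacle'' never arises, since on this branch no principal curvature can vanish.

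If you insist on your route, it can be completed, but it costs more than the paper's one-line use of \eqref{s2}: by Codazzi and the symmetry of $S_\eta$, the tensor $C(X,Y,Z)=\langle(\nabla_X S_\eta)(Y),Z\rangle$ is totally symmetric; differentiating \eqref{s1} along geodesics gives $C(X,X,X)=0$ for every unit $X$, while \eqref{s3} in a space form gives $C(X,X,Y)=0$ for $Y\perp X$ (the right-hand side is normal), and polarization then yields $C\equiv 0$, i.e.\ $\nabla S_\eta\equiv 0$. Note also that your bookkeeping of the components is off: it is the tangential condition \eqref{s1} that controls the ``diagonal'' derivatives $C(X,X,X)$, whereas the normal condition \eqref{s2} is purely algebraic and contributes no derivative information--its role, both in the paper and in a corrected version of your plan, is to pin down the principal curvatures themselves and to deliver $\lvert S_\eta\rvert^2=\Ricci(\eta,\eta)$.
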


This result gives us a way to classify totally biharmonic hypersurfaces in space forms. In fact, the classification of biharmonic  isoparametric hypersurfaces in space forms is well-known (see, for instance, \cite{urakawa}) and, therefore, by a direct check one can obtain the corresponding local classification of totally biharmonic hypersurfaces (see Theorem~\ref{clas}).

However, in general, it may be interesting to obtain the classifications directly from the definition of totally biharmonic hypersurfaces, since this may give a new insight in the theory of biharmonic submanifolds. 

To this aim we concentrate ourselves to the classification of totally biharmonic surfaces in the Bianchi-Cartan-Vranceanu (BCV) spaces $N(a,b)$. These spaces, as we shall detail in \S3, represent a local model of the 3-dimensional homogeneous manifolds with isometry group of dimension 4 or 6 except for the hyperbolic space. 
We first prove that 
 a totally biharmonic surface into a BCV space $N(a,b)$, $4\,a\neq b^2$,  is either totally geodesic or a rotational Hopf cylinder (Theorem~\ref{hopf}). As a consequence, we conclude with the following local classification of totally biharmonic surfaces into BCV spaces.

\begin{thm}\label{relconj} Let $M$ be a totally biharmonic surface into a BCV space $N(a,b)$. Then $M$ is either a part of a totally geodesic surface or one of the following:
\begin{itemize}
\item[(i)] a part of the Clifford torus $\mathbb{S}^1(\sqrt{2\,\rho\,}\,)\times\mathbb{S}^1(\sqrt{2\,\rho\,}\,)\subset\mathbb{S}^3(\rho)$;
\item[(ii)] a part of the totally umbilical sphere $\mathbb{S}^2(2\,\rho)\subset\mathbb{S}^3(\rho)$;
\item[(iii)] a part of the cylinder $\mathbb{S}^1(\sqrt{2\,\rho\,}\,)\times\mathbb{R}\subset\mathbb{S}^2\left(\rho\right)\times\mathbb{R}$.
\end{itemize}
\end{thm}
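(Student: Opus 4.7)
The plan is to split the proof into two cases according to whether the BCV space $N(a,b)$ is (locally) a space form or not, that is, whether $4a=b^2$ or $4a\neq b^2$. In the first case the totally geodesic surfaces are listed explicitly and Theorem~\ref{TB=IB} reduces the problem to the known classification of isoparametric biharmonic hypersurfaces; in the second case Theorem~\ref{hopf} reduces it to a direct check on rotational Hopf cylinders.

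First, I would handle the space form case. If $4a=b^2$, the only choices with non-trivial geometry correspond to $\mathbb{S}^3(\rho)$ with $\rho=b^2/4$ (the flat case $a=b=0$ contains no biharmonic isoparametric hypersurfaces other than totally geodesic planes, since a biharmonic hypersurface with constant mean curvature in a flat space is minimal). By Theorem~\ref{TB=IB}, a totally biharmonic surface in $\mathbb{S}^3(\rho)$ is an isoparametric biharmonic surface. The classification of isoparametric biharmonic hypersurfaces in spheres (see, e.g., \cite{urakawa}) then yields exactly two non-totally-geodesic possibilities: the totally umbilical sphere of appropriate radius, namely $\mathbb{S}^2(2\rho)\subset \mathbb{S}^3(\rho)$ (case (ii)), and the Clifford torus $\mathbb{S}^1(\sqrt{2\rho})\times \mathbb{S}^1(\sqrt{2\rho})\subset \mathbb{S}^3(\rho)$ (case (i)).

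Next I would treat the case $4a\neq b^2$. Theorem~\ref{hopf} says that such a totally biharmonic surface is either totally geodesic or a rotational Hopf cylinder, so only the second possibility needs to be examined. For each subfamily of BCV spaces with $4a\neq b^2$ (the Berger spheres with $b\neq 0$, the Heisenberg group, $\widetilde{SL_2(\mathbb{R})}$, and the products $\mathbb{S}^2(a)\times\mathbb{R}$ and $\mathbb{H}^2(a)\times\mathbb{R}$ with $b=0$), a rotational Hopf cylinder is constructed as the preimage, under the Hopf-type projection, of a geodesic circle on the base of curvature depending on a single parameter. I would fix an adapted frame of the Hopf cylinder, compute the shape operator $S_\eta$, the mean curvature $H$, and the Ricci tensor of $N(a,b)$ in the direction of the unit normal $\eta$, and substitute all of this into the characterization of totally biharmonic surfaces recalled in Proposition~\ref{condition}. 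Because the cylinder is rotationally invariant, these tensors are constant along the fiber direction, so the resulting system reduces to an algebraic condition on $(a,b)$ and the radius of the base circle.

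Solving this algebraic condition is the main obstacle: I expect it to force $b=0$ (no Hopf-cylinder example survives when the fibration is non-trivial) and, within the product family $\mathbb{S}^2(a)\times\mathbb{R}\cup\mathbb{H}^2(a)\times\mathbb{R}$, to pin down a unique radius for the base circle on $\mathbb{S}^2(\rho)$ and to rule out any example on $\mathbb{H}^2(a)\times\mathbb{R}$ (on the negatively curved factor the sign in the biharmonic equation prevents a non-trivial solution). A careful case analysis should yield exactly the cylinder $\mathbb{S}^1(\sqrt{2\rho})\times \mathbb{R}\subset \mathbb{S}^2(\rho)\times \mathbb{R}$ (case (iii)). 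Combining this with the space-form case and the totally geodesic option completes the classification.
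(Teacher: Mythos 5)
Your plan is correct and, in its architecture, it is the paper's own proof: the case $4a=b^2$ is exactly the $n=3$ case of Theorem~\ref{clas} (via Theorem~\ref{TB=IB} and the known classification of biharmonic isoparametric surfaces of $\mathbb{S}^3$; the flat case is disposed of as you indicate, and the hyperbolic case never arises since $4a=b^2$ forces curvature $b^2/4\geq 0$), while the case $4a\neq b^2$ is reduced by Theorem~\ref{hopf} to deciding which rotational Hopf cylinders are totally biharmonic, which is the content of Theorem~\ref{clasBCV}. Where you genuinely differ is in how you settle that last step: the paper (Proposition~\ref{Hr}) writes the geodesics of the cylinder as explicit helices \eqref{par} and imposes the biharmonic-curve system \eqref{b11}--\eqref{b14}, obtaining the equation \eqref{eq:rr}, which must hold for every value of the slope parameter $\mu$; this forces $b=0$, $a>0$ and $r^2=(3\pm 2\sqrt{2})/a$, and the two radii give the same cylinder up to orientation. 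You instead impose the pointwise characterization of Proposition~\ref{condition} on the shape operator, and your expectations are borne out: in the adapted frame one has $S_\eta(\hat E_1)=\kappa_g\hat E_1+\tfrac{b}{2}E_3$, $S_\eta(E_3)=\tfrac{b}{2}\hat E_1$, and condition \eqref{s2} in the directions $X_\omega=\sin\omega\,\hat E_1+\cos\omega\,E_3$ reads $\sin^2\omega\,(\kappa_g^2-4a+b^2)+b\,\kappa_g\sin\omega\cos\omega=0$; since this must hold for all but finitely many $\omega$, it gives $\kappa_g^2=4a-b^2$ and $b\,\kappa_g=0$, hence $b=0$, $a>0$ and the circle of geodesic curvature $2\sqrt{a}$ in $\mathbb{S}^2(4a)$, i.e.\ case (iii), while the Heisenberg group, the Berger spheres, $\widetilde{Sl}(2,\mathbb{R})$ and $\mathbb{H}^2\times\mathbb{R}$ are excluded, exactly as you predict. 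Two cautions: you leave this decisive computation as an expectation, so it must actually be carried out; and Proposition~\ref{condition} has to be applied direction by direction, exempting those directions whose geodesics are already geodesics of $N(a,b)$ --- in particular the fibre direction $E_3$, where $\langle S_\eta(E_3),E_3\rangle=0$ --- since otherwise condition \eqref{s1} would spuriously rule out the cylinder of case (iii) itself (this is how the paper proceeds in Proposition~\ref{Hr}).
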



\section{Totally Biharmonic Hypersurfaces}

\emph{Harmonic maps} $\varphi\,: (M,g)\rightarrow (N,h)$ between Riemannian manifolds are the critical points of the energy functional
\begin{equation}
E(\varphi)=\frac{1}{2}\int_M \lvert d\varphi \rvert ^2 v_g\,  \nonumber
\end{equation}
and the corresponding Euler-Lagrange equation is given by the vanishing of the tension field
$\tau(\varphi)=\trace \nabla d\varphi$ (see \cite{Eells-Sampson}).
In \cite{ES}, J. Eells and J. H. Sampson suggested to study \emph{biharmonic maps}, which are the critical points of the \emph{bienergy} functional
\begin{equation}
E_2(\varphi)=\frac{1}{2}\int_M \lvert \tau(\varphi)\rvert ^2 v_g\, . \nonumber
\end{equation} 
The first variation formula of the bienergy was derived by G. Y. Jiang in \cite{J}, proving that the Euler-Lagrange equation for $E_2$ is 
\begin{equation}
\tau_2(\varphi)=-\Delta \tau(\varphi)-\trace R^N(d\varphi,\tau(\varphi))d\varphi=0\, , \label{bitensionfield}
\end{equation}
where $R^N$ is the curvature operator of $(N,h)$ defined by
\begin{equation}
R^N(X,Y)=\nabla_X\nabla_Y-\nabla_Y\nabla_X-\nabla_{[X,Y]}\,, \quad X,Y\in C(TN), \nonumber
\end{equation}
and $\Delta$ represents the rough Laplacian on $C(\varphi^{-1}(TN))$, which, for a local orthonormal frame $\{e_i\}_{i=1}^m$ on $M$, is defined by
\begin{equation}
\Delta=-\sum_{i=1}^m\left(\nabla_{e_i}^\varphi\nabla_{e_i}^\varphi-\nabla^\varphi_{\nabla_{e_i}^M e_i}\right)\, . \nonumber
\end{equation}
The equation $\tau_2(\varphi)=0$ is called the \emph{biharmonic equation}.

Let $\varphi: M^m\rightarrow N^n$ be an isometric immersion of a Riemannian manifold of dimension $m$ into another Riemannian manifold of dimension $n>m$. The positive integer $n-m$ is called the \emph{codimension} of the immersion $\varphi$. In particular, if $n-m=1$, we will say that $M^{n-1}$ is an \emph{hypersurface} of $N^n$.

Now, for an isometric immersion of a hypersurface $M^{n-1}$ into a Riemannian manifold $N^n$, let's denote by $\eta$ the unit normal vector field to $M^{n-1}$ in $N^n$ and by  $S_\eta$ the corresponding shape operator.


For isometric immersions of codimension one the bitension field can be nicely decomposed in its normal and tangential components as it is shown in the following theorem (see, for instance, \cite{BMO13,C84,LM08,O10}).

\begin{thm}\label{decompbitension} Let $\varphi:M^{n-1}\rightarrow N^n$ be an isometric immersion with unit normal vector field $\eta$ and mean curvature function $H$. Then, the normal and tangential components of $\tau_2(\varphi)$ are respectively
\begin{equation}\label{eq:def-bih}
\begin{cases}
\Delta H+H \lvert S_\eta \rvert^2-H \Ricci (\eta,\eta)=0\\
2S_\eta(\grad H)+ (n-1) H \grad H-2 H \Ricci (\eta)^T=0
\end{cases}
\end{equation}
where $S_\eta$ is the shape operator and $\Ricci(\eta)^T$ is the tangent component of the Ricci curvature of $N$ in the direction of the vector field $\eta$.
\end{thm}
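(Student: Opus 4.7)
The plan is to compute both sides of the biharmonic equation $\tau_2(\varphi)=-\Delta\tau(\varphi)-\trace R^{N}(d\varphi,\tau(\varphi))d\varphi$ and then separate the result into its parts normal and tangential to $M^{n-1}$. The starting point is the standard identity $\tau(\varphi)=(n-1)H\eta$ valid for an isometric immersion of codimension one, so the whole calculation reduces to understanding $\Delta(H\eta)$ and $\trace R^{N}(d\varphi,H\eta)d\varphi$ at a point, evaluated in a convenient local orthonormal frame $\{e_{i}\}_{i=1}^{n-1}$ on $M^{n-1}$ that is geodesic at the chosen point, so that $\nabla^{M}_{e_{i}}e_{i}=0$ there.

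For the rough Laplacian, the idea is to apply Leibniz twice together with the Weingarten formula $\nabla^{\varphi}_{X}\eta=-S_{\eta}(X)$ and the Gauss formula $\nabla^{\varphi}_{X}Y=\nabla^{M}_{X}Y+\langle S_{\eta}X,Y\rangle\eta$ (the latter used to re-expand $\nabla^{\varphi}_{e_{i}}(S_{\eta}(e_{i}))$, which again yields a tangential and a normal piece). A single differentiation gives
\begin{equation*}
\nabla^{\varphi}_{e_{i}}(H\eta)=e_{i}(H)\,\eta-H\,S_{\eta}(e_{i}),
\end{equation*}
and differentiating once more and summing over $i$ (using that $\{e_{i}\}$ is geodesic at the point) produces four types of terms: a Hessian of $H$ in the $\eta$ direction, a term $H\,|S_{\eta}|^{2}\eta$, a term $-2\,S_{\eta}(\grad H)$, and a term $-H\,\trace(\nabla S_{\eta})$. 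Arranging these carefully, the normal component contributes $-\Delta H+H|S_{\eta}|^{2}$ (after noting $\sum_{i}e_{i}(e_{i}(H))=-\Delta H$ at the geodesic point with the sign convention used in \eqref{bitensionfield}), and the tangential component contributes $-2S_{\eta}(\grad H)-H\,\trace(\nabla S_{\eta})$. The Codazzi equation will then be used to turn $\trace(\nabla S_{\eta})$ into $(n-1)\grad H$ plus a curvature term of the form $\Ricci(\eta)^{T}$.

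For the curvature term, I would compute
\begin{equation*}
\trace R^{N}(d\varphi,(n-1)H\eta)d\varphi=(n-1)H\sum_{i}R^{N}(e_{i},\eta)e_{i}.
\end{equation*}
Splitting each $R^{N}(e_{i},\eta)e_{i}$ into its $\eta$-component and its tangential component and summing reproduces $\Ricci(\eta,\eta)\eta$ on the normal side and $\Ricci(\eta)^{T}$ on the tangential side (more precisely, the trace over the full orthonormal frame $\{e_{i},\eta\}$ of $N$ differs from the trace over $\{e_{i}\}$ only by the $R^{N}(\eta,\eta)\eta=0$ term, so the two traces coincide). Collecting the pieces in $-\Delta\tau(\varphi)-\trace R^{N}(d\varphi,\tau(\varphi))d\varphi$ and setting the normal and tangential parts equal to zero gives exactly the two equations in \eqref{eq:def-bih}.

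The only nontrivial obstacle is bookkeeping: one must keep track of signs in $\Delta$ (defined with a minus sign in the excerpt), correctly split $\nabla^{\varphi}_{e_{i}}(S_{\eta}(e_{i}))$ through Gauss, and apply the Codazzi identity $(\nabla_{X}S_{\eta})Y-(\nabla_{Y}S_{\eta})X=(R^{N}(X,Y)\eta)^{T}$ to rewrite $\trace(\nabla S_{\eta})$. Once these manipulations are done at a geodesic point (where covariant derivatives of the frame vanish), the two equations pop out invariantly, and by arbitrariness of the point they hold on all of $M^{n-1}$.
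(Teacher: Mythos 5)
The paper itself offers no proof of Theorem \ref{decompbitension}: it is quoted as a known result from \cite{BMO13,C84,LM08,O10}, and your plan is exactly the standard derivation found there --- write $\tau(\varphi)=(n-1)H\eta$, expand the rough Laplacian of $H\eta$ via Gauss--Weingarten in a frame geodesic at a point, convert $\trace(\nabla S_\eta)$ by Codazzi, and read off the Ricci terms from the curvature trace. So the strategy is the right one and, carried out consistently, it does yield \eqref{eq:def-bih}.

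Two sign points in your sketch must be repaired before the computation actually closes. First, with your formulas the normal component of $\sum_i\nabla^\varphi_{e_i}\nabla^\varphi_{e_i}(H\eta)$ is $\sum_i e_i(e_i(H))-H\lvert S_\eta\rvert^2=-\Delta H-H\lvert S_\eta\rvert^2$ (paper's sign of $\Delta$), not $-\Delta H+H\lvert S_\eta\rvert^2$; as stated, your normal and tangential bookkeeping are inconsistent with each other, and taken literally the normal equation would come out as $\Delta H-H\lvert S_\eta\rvert^2-H\Ricci(\eta,\eta)=0$, with the wrong relative signs. Second, and more consequentially, with the paper's convention $R(X,Y)=\nabla_X\nabla_Y-\nabla_Y\nabla_X-\nabla_{[X,Y]}$ and your Weingarten formula $\nabla^\varphi_X\eta=-S_\eta(X)$, the Codazzi identity reads $(R^N(X,Y)\eta)^T=(\nabla_Y S_\eta)X-(\nabla_X S_\eta)Y$, the opposite of what you wrote. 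The sign matters: the correct version gives $\trace(\nabla S_\eta)=(n-1)\grad H-\Ricci(\eta)^T$, and this Ricci contribution adds to the tangential part of $-\trace R^N(d\varphi,\tau(\varphi))d\varphi$, which equals $(n-1)H\,\Ricci(\eta)^T$, producing precisely the coefficient $2$ in $-2H\Ricci(\eta)^T$; with your sign the two Ricci contributions cancel and the tangential equation would come out as $2S_\eta(\grad H)+(n-1)H\grad H=0$, missing the curvature term entirely. Once these two signs are fixed, your argument is the standard proof of the statement.
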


We shall say that an isometric immersion $\varphi:M^{n-1}\rightarrow N^n$ is {\it biharmonic} if the corresponding bitension field vanishes, that is if system~\eqref{eq:def-bih} of Theorem~\ref{decompbitension} is satisfied. Moreover, as introduced in \cite{LM08}, we shall say that a hypersurface $M^{n-1}$ is \emph{biminimal} if only the normal component of $\tau_2(\varphi)$ vanishes, that is, according to Theorem~\ref{decompbitension}, if the following equation is verified
\begin{equation}
\Delta H+H \lvert S_\eta \rvert^2-H \Ricci(\eta,\eta)=0\, . \label{a-b}
\end{equation}
Observe that minimal hypersurfaces ($H=0$) are clearly biharmonic and that biharmonic hypersurfaces are obviously biminimal. Moreover, any constant mean curvature biminimal hypersurface is biharmonic provided that the tangent component of the Ricci curvature of $N^n$, in the direction of the vector field $\eta$, vanishes, that is $\Ricci (\eta)^T=0$.

Let now $\gamma: I\rightarrow N^n$ be a curve parametrized by arc-length, from an open interval $I\subset\mathbb{R}$ to a Riemannian manifold $N^n$. In this case, putting $\gamma' =\mathbf{t}$, the biharmonic equation \eqref{bitensionfield} reduces to
\begin{equation}
\tau_2(\gamma)=\nabla_{\mathbf{t}}^{3} \,\mathbf{t}+R^N\left(\nabla_\mathbf{t}\, \mathbf{t},  \mathbf{t}\right) \mathbf{t}=0\, . \label{bcurve}
\end{equation}
Solutions of this equation are called \emph{biharmonic curves}. In particular, any geodesic ($\nabla_\mathbf{t}\, \mathbf{t}=0$) is a biharmonic curve. We say that a biharmonic curve is \emph{proper} if it is not a geodesic. 

A natural question is to understand when the geodesics of a given hypersurface $M^{n-1}$ of a Riemannian manifold $N^n$ are always biharmonic curves in the ambient space. More precisely, consider a codimension one immersion $\varphi: M^{n-1}\rightarrow N^n$, we say that the immersed hypersurface  $M^{n-1}$ is a \emph{totally biharmonic hypersurface} if all geodesics of $M^{n-1}$ are biharmonic curves in $N^n$. This notion was first introduced in \cite{IM}, where the classification of totally biharmonic surfaces into 3-dimensional space forms was given. 

Notice that totally biharmonic hypersurfaces generalize the notion of totally geodesic hypersurfaces in the same way biharmonic curves generalize geodesics.

In order to characterize the totally biharmonic hypersurfaces, we shall recall  the following proposition first proved in \cite{IM}.

\begin{prop} \label{condition} Let $\varphi: M^{n-1}\rightarrow N^n$ be a codimension one isometric immersion. Then, the immersed hypersurface is totally biharmonic if and only if, for any point $p\in M^{n-1}$, one of the followings is satisfied,
\begin{itemize}
\item $S_\eta \equiv 0$ and $\varphi$ is totally geodesic; or,
\item $S_\eta$ is a solution of the following system,
\begin{eqnarray}
&& \langle S_\eta(X),X\rangle =cst\neq 0\, , \label{s1}\\
&& \langle S_\eta(X), S_\eta(X)\rangle=K^N(X,\eta)\, ,\label{s2}\\
&& \langle \nabla_X S_\eta(X),Y\rangle=-\langle R^N(X,\eta)X,Y\rangle\, , \label{s3}
\end{eqnarray}
where $X$, $Y$ are any orthonormal vectors of $T_p M^{n-1}$.
\end{itemize}
\end{prop}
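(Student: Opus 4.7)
The plan is to expand the biharmonic equation $\tau_2(\gamma)=\nabla_t^3 t+R^N(\nabla_t t,t)t=0$ for an arbitrary unit-speed geodesic $\gamma$ of $M^{n-1}$ using the Gauss and Weingarten formulas, and then to project the result onto the normal and tangent spaces of the immersion. The three equations \eqref{s1}--\eqref{s3} together with the totally geodesic dichotomy will then fall out of this decomposition.

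Writing $t=\gamma'$ and $k:=\langle S_\eta(t),t\rangle$, and using $\nabla^M_t t=0$, Gauss gives $\nabla^N_t t=k\,\eta$. Iterating with Weingarten ($\nabla^N_t\eta=-S_\eta(t)$) and Gauss applied to $S_\eta(t)$, so that $\nabla^N_t(S_\eta(t))=(\nabla^M_t S_\eta)(t)+|S_\eta(t)|^2\eta$, one obtains
\begin{equation*}
\nabla^N_t\nabla^N_t\nabla^N_t t=\bigl(t^2 k-k\,|S_\eta(t)|^2\bigr)\eta-2(tk)\,S_\eta(t)-k\,(\nabla^M_t S_\eta)(t).
\end{equation*}
Adding $R^N(\nabla^N_t t,t)t=k\,R^N(\eta,t)t$ and setting the sum to zero, I would split into the normal component
\begin{equation*}
t^2 k-k\,|S_\eta(t)|^2+k\,K^N(t,\eta)=0
\end{equation*}
(using $\langle R^N(\eta,t)t,\eta\rangle=K^N(t,\eta)$) and the tangential component
\begin{equation*}
-2(tk)\,S_\eta(t)-k\,(\nabla^M_t S_\eta)(t)+k\,\bigl(R^N(\eta,t)t\bigr)^T=0.
\end{equation*}

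The decisive step is pairing the tangential equation with $t$: using $\langle R^N(\eta,t)t,t\rangle=0$, $\langle S_\eta(t),t\rangle=k$ and $\langle(\nabla^M_t S_\eta)(t),t\rangle=tk$, every contribution collapses into a multiple of $k(tk)$, yielding $-3k(tk)=0$. Hence along every geodesic either $k\equiv 0$ or $k$ is a nonzero constant, which is precisely \eqref{s1}. If $k\equiv 0$ along every geodesic through $p$, then $\langle S_\eta(X),X\rangle=0$ for every unit $X\in T_pM^{n-1}$ and self-adjointness forces $S_\eta(p)=0$, giving the totally geodesic alternative. In the other case, substituting $tk=0$ (and hence $t^2k=0$) into the normal equation gives $|S_\eta(t)|^2=K^N(t,\eta)$, i.e.\ \eqref{s2}, while pairing the tangential equation with any unit $Y\perp t$ and using $R^N(\eta,t)=-R^N(t,\eta)$ produces \eqref{s3}. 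The converse is then obtained by reading the same chain of identities backwards along each geodesic.

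The main obstacle I foresee is careful bookkeeping in the iterated Gauss--Weingarten computation: each application of $\nabla^N_t$ introduces both a tangential piece via $\nabla^M$ and a normal piece via the second fundamental form, and correct sign conventions for the curvature tensor and sectional curvature must be tracked to land on the precise form of \eqref{s2}--\eqref{s3}. A secondary subtlety is the behaviour at points where $k$ vanishes for some directions but not others: the identity $k(tk)=0$ prevents $k$ from switching between the zero and nonzero regimes along a single orbit, so the two alternatives are genuinely exclusive on each connected geodesic trajectory, though one should confirm by continuity that the alternatives are compatible globally on $M^{n-1}$.
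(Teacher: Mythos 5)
Your computation is correct, and it is essentially the standard proof of this proposition: the paper does not reprove it but recalls it from \cite{IM}, and the argument there is exactly this Gauss--Weingarten expansion of $\nabla_{\mathbf{t}}^{3}\mathbf{t}+R^N(\nabla_{\mathbf{t}}\mathbf{t},\mathbf{t})\mathbf{t}$ along a geodesic of $M^{n-1}$, with the pairing of the tangential part against $\mathbf{t}$ giving $k(\mathbf{t}k)=0$, hence \eqref{s1}, and the remaining normal and tangential components giving \eqref{s2} and \eqref{s3}.

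Concerning the subtlety you flag at the end: what your argument genuinely proves is the direction-wise alternative -- for each $p$ and each unit $X\in T_pM^{n-1}$, either $\langle S_\eta(X),X\rangle$ vanishes identically along the geodesic through $p$ in direction $X$ (that geodesic is then an ambient geodesic), or \eqref{s1}--\eqref{s3} hold along it. This cannot be upgraded to the literal pointwise dichotomy in the statement (namely \eqref{s1} for \emph{every} unit $X$ unless $S_\eta\equiv 0$), and you should not try: the cylinder $\mathbb{S}^1(\sqrt{2\rho}\,)\times\mathbb{R}\subset\mathbb{S}^2(\rho)\times\mathbb{R}$ of Theorem~\ref{relconj}\,(iii) is totally biharmonic and not totally geodesic, yet its vertical principal curvature vanishes, so $\langle S_\eta(E_3),E_3\rangle=0$ at every point and \eqref{s1} fails in that direction. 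So the issue is a looseness in the wording of the proposition, not a gap in your derivation; the statement is used later (for instance in the proof of Theorem~\ref{hopf}) precisely in the direction-wise form you obtained. It is also worth noting that if $S_\eta(p)\neq 0$ then the set of unit directions with $\langle S_\eta(X),X\rangle\neq 0$ is open and dense in the unit sphere of $T_pM^{n-1}$, so \eqref{s2} and \eqref{s3} extend by continuity to all directions, which is what the applications in the paper (evaluation at principal directions) actually require.
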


\section{Totally Biharmonic Hypersurfaces of Space Forms}

In this section we are going to classify totally biharmonic hypersurfaces of $n$-dimensional space forms. Let us denote by $N^n=N^n(\rho)$ any Riemannian space form of dimension $n$, where $\rho$ represents the constant sectional curvature. If $\rho<0$, we have the $n$-dimensional \emph{hyperbolic space}, simply denoted by $\mathbb{H}^n(\rho)$; when $\rho=0$, $N^n(\rho)=\mathbb{R}^n$ denotes the \emph{Euclidean space} of dimension $n$; and, finally, for positive $\rho$ we recover the $n$-dimensional \emph{round sphere}, $\mathbb{S}^n(\rho)$. When we want to explicitly indicate the radius $R=1/\sqrt{\rho}$ of the sphere $\mathbb{S}^n(\rho)$, we will use the notation $\mathbb{S}^n\left[R\right]$.

We are now in the right position to prove Theorem \ref{TB=IB}.

\subsection{Proof of Theorem \ref{TB=IB}}

Let us assume that $M^{n-1}$ is a totally biharmonic hypersurface of $N^n(\rho)$. Then, by Proposition \ref{condition}, we have that $M^{n-1}$ is either totally geodesic or its shape operator verifies equations \eqref{s1}--\eqref{s3}.
First, if $M^{n-1}$ is totally geodesic, then it is clearly isoparametric. Moreover, in this case, $M^{n-1}$ is also minimal and, therefore, biharmonic, proving the statement. \\
On the other hand, suppose that $M^{n-1}$ is not totally geodesic. Then, its shape operator $S_\eta$ must verify \eqref{s1}--\eqref{s3}. Now, for a fixed point $p\in M^{n-1}$ and any principal direction, $e_i$, $i=1,2,...,n-1$, of the hypersurface, we have that $S_\eta\left(e_i\right)=\lambda_i\,e_i$. Then, from equation \eqref{s2} we have that the associated principal curvature $\lambda_i$ must verify $\lambda_i^2=\rho$. Furthermore, using equation \eqref{s1}, $\lambda_i$ is a non-vanishing constant. In particular, this is only possible when $\rho>0$ and $N^n(\rho)=\mathbb{S}^n(\rho)$. In any case, the hypersurface $M^{n-1}$ is isoparametric. Moreover, since all principal curvatures are constant, we also have that $M^{n-1}$ has constant mean curvature, $H=H_o$. \\
If $H_o=0$, then the hypersurface is minimal and, as above, it is also biharmonic and we are done. Therefore, we consider in what follows that $H_o\neq 0$. In this case, by substitution in \eqref{a-b}, we conclude that $M^{n-1}$ is biminimal if and only if the following equation is verified
\begin{equation}\label{eq-biminimal-teo}
\lvert S_\eta\rvert^2=\Ricci \left(\eta,\eta\right).
\end{equation}
At this step, evaluating equation \eqref{s2} at any principal direction $e_i$, we obtain that
\begin{equation}
\lambda_i^2=\langle S_\eta(e_i),S_\eta(e_i)\rangle =K^N\left(e_i,\eta\right).\nonumber
\end{equation}
Therefore, using the above identity, we obtain
\begin{equation}
\lvert S_\eta\rvert^2=\sum_{i,j=1}^{n-1}\langle S_\eta\left(e_i\right),e_j\rangle^2=\sum_{i,j=1}^{n-1}\lambda_i^2\langle e_i,e_j\rangle^2=\sum_{i=1}^{n-1}\lambda_i^2=\sum_{i=1}^{n-1}K^N\left(e_i,\eta\right)=\Ricci \left(\eta,\eta\right),\nonumber
\end{equation}
proving that equation \eqref{eq-biminimal-teo} is satisfied and, therefore, $M^{n-1}$ is biminimal. \\
Finally, as mentioned in \S 2, constant mean curvature biminimal hypersurfaces are biharmonic if the tangent component of the Ricci curvature of $N^n$ in the direction of the vector field $\eta$ vanishes, that is $\Ricci (\eta)^T=0$. In particular, if $N^n$ is a space with constant sectional curvature, then, for any hypersurface into $N^n=N^n(\rho)$, $\Ricci (\eta)^T=0$. We thus have that $M^{n-1}$ is an isoparametric biharmonic hypersurface. \hfill $\square$
\\

To end up this section, we prove the following classification.

\begin{thm}\label{clas} Let $M^{n-1}$ be a hypersurface into a space form $N^n(\rho)$. If $\rho\leq 0$, then $M^{n-1}$ is locally a totally geodesic hypersurface. On the other hand, if $\rho>0$, then up to rescaling, $N^n(\rho)\cong\mathbb{S}^n(1)=\mathbb{S}^n[1]$ and $M^{n-1}$ is, locally, either a totally geodesic hypersurface or one of the followings:
\begin{itemize}
\item[(i)] a part of the totally umbilical sphere $\mathbb{S}^{n-1}[1/\sqrt{2}\,]$;
\item[(ii)] a part of the generalized Clifford torus $\mathbb{S}^p[1/\sqrt{2}\,]\times\mathbb{S}^{n-p-1}[1/\sqrt{2}\,]$  for any $p>0$.
\end{itemize}
\end{thm}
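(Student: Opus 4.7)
The plan is to combine Theorem~\ref{TB=IB} with the classification of biharmonic isoparametric hypersurfaces of space forms. By Theorem~\ref{TB=IB}, any totally biharmonic $M^{n-1}\subset N^n(\rho)$ is isoparametric and biharmonic, and, as extracted in the course of that proof, if $M^{n-1}$ is not totally geodesic then its principal curvatures satisfy $\lambda_i^{2}=\rho$. This already disposes of $\rho\leq 0$: for $\rho<0$ no real $\lambda_i$ exists, while for $\rho=0$ the only solution is $\lambda_i=0$, so in both cases $M^{n-1}$ must be totally geodesic.

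For $\rho>0$, after rescaling so that $\rho=1$ and $N^n(\rho)\cong\mathbb{S}^n[1]$, the hypersurface $M^{n-1}$ is isoparametric in $\mathbb{S}^n[1]$ with principal curvatures in $\{+1,-1\}$. I would then appeal to the classical Cartan classification of isoparametric hypersurfaces of spheres with at most two distinct principal curvatures, according to which $M^{n-1}$ is locally either a great sphere, a totally umbilical small sphere $\mathbb{S}^{n-1}[r]$, or a standard product $\mathbb{S}^p[r_1]\times\mathbb{S}^{n-p-1}[r_2]$ with $r_1^{2}+r_2^{2}=1$; imposing $|\lambda_i|=1$ forces $r=1/\sqrt{2}$ in the totally umbilical case and $r_1=r_2=1/\sqrt{2}$ in the product case, producing exactly (i) and (ii). Equivalently, this coincides with the Urakawa-type list of biharmonic isoparametric hypersurfaces of $\mathbb{S}^n$.

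The converse must then be verified, namely that each candidate on the list is genuinely totally biharmonic. The totally geodesic and totally umbilical cases are immediate, since $S_\eta$ is a constant multiple of the identity, the ambient curvature tensor of $\mathbb{S}^n[1]$ sends $R^N(X,\eta)X$ into the normal direction, and conditions \eqref{s1}--\eqref{s3} of Proposition~\ref{condition} reduce to the identity $\lambda^{2}=1$. The main obstacle is the Clifford torus: its principal distributions are tangent to the two spherical factors and parallel along each factor, so any geodesic of the product has constant components with respect to the principal frame. Using the explicit embedding in $\mathbb{R}^{n+1}$ together with the formula $R^N(X,Y)Z=\langle Y,Z\rangle X-\langle X,Z\rangle Y$, one verifies by a direct computation, exploiting the product structure and the flatness of the induced metric, that such a geodesic satisfies the biharmonic curve equation \eqref{bcurve}, which completes the classification.
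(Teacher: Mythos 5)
Your argument is correct and follows the paper's skeleton (Theorem~\ref{TB=IB}, the relation $\lambda_i^2=\rho$ from \eqref{s2} to kill $\rho\le 0$, then a classification step plus a converse verification), but it differs in the classification step: you invoke Cartan's classification of isoparametric hypersurfaces of $\mathbb{S}^n[1]$ with at most two distinct principal curvatures and then impose $\lvert\lambda_i\rvert=1$ to pin down the radii, whereas the paper invokes the classification of proper biharmonic hypersurfaces with at most two distinct principal curvatures from \cite{new} and then treats the minimal case (the Clifford torus with $p=n-p-1$) separately. Your route has the advantage of handling the minimal and non-minimal cases uniformly and of using only the isoparametric machinery already implicit in \cite{Ca1}; the paper's route leans on the biharmonic classification but then must append the minimal Clifford torus by hand. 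For the converse, your treatment of the totally geodesic and umbilical cases via \eqref{s1}--\eqref{s3} matches the paper. For the generalized Clifford tori you only assert the key computation; the paper carries it out by writing a geodesic as $\gamma(s)=(\alpha(as),\beta(bs))$ with $a^2+b^2=1$, parametrizing $\alpha,\beta$ as great circles of radius $1/\sqrt{2}$, and observing that the resulting curve belongs to the family of biharmonic curves in $\mathbb{S}^n[1]$ of Proposition 4.4 of \cite{nuevo} (equivalently, one checks $\kappa=\lvert a^2-b^2\rvert$, $\tau=\pm 2ab$, so $\kappa^2+\tau^2=1$ and \eqref{bcurve} holds). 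Two small corrections to your write-up: the induced metric on $\mathbb{S}^p[1/\sqrt{2}\,]\times\mathbb{S}^{n-p-1}[1/\sqrt{2}\,]$ is flat only when $p=n-p-1=1$, so you should justify the splitting of geodesics by the Riemannian product structure alone (which suffices), not by flatness; and you should either perform the direct verification or cite a result such as Proposition 4.4 of \cite{nuevo}, since that computation is where the converse actually lives.
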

\begin{proof}  
From Theorem \ref{TB=IB} we know that a totally biharmonic hypersurface into $N^n(\rho)$ must be isoparametric and biharmonic. Therefore, if $M^{n-1}$ is not totally geodesic we just need to check when conditions \eqref{s1}-\eqref{s3} are verified for an isoparametric biharmonic hypersurface. Let us take any principal direction $e_i$, $i=1,...,n-1$, of $M^{n-1}$ and denote by $\lambda_i$ its associated principal curvature. Now, since $N^n(\rho)$ has constant sectional curvature $\rho$, equation \eqref{s2} reads
$$\lambda_i^2=\rho$$
for any $i=1,...,n-1$, which is clearly a contradiction if $\rho\leq 0$. That is, in these cases, the only totally biharmonic hypersurfaces are the totally geodesic ones.\\
On the other hand, if $\rho>0$, up to rescaling we can assume that $N^n(\rho)\cong\mathbb{S}^n(1)=\mathbb{S}^n[1]$. In this case, we have $\lambda_i^2=1$, $i=1,...,n-1$. Thus, the hypersurface has at most two distinct principal curvatures. Proper (non minimal) biharmonic hypersurfaces with at most two distinct principal curvatures were classified in \cite{new}, where the authors proved that they are either a part of $\mathbb{S}^{n-1}[1/\sqrt{2}\,]$ or a part of $\mathbb{S}^p[1/\sqrt{2}\,]\times\mathbb{S}^{n-p-1}[1/\sqrt{2}\,]$ with $p\neq n-p-1$. We now show that the non minimal biharmonic hypersurfaces described above are totally biharmonic. We begin with the case of $\mathbb{S}^{n-1}[1/\sqrt{2}\,]$. Notice that equations \eqref{s1}-\eqref{s3} are verified in this case. Indeed, for this hypersurface we have that $\lambda_i=1$, which is enough to prove \eqref{s1} and \eqref{s2}. Finally, the left-hand side of equation \eqref{s3}, in this case, becomes
\begin{equation}\label{eq:new-proof}
\langle \nabla_{e_i} S_\eta(e_i), e_j\rangle= \langle\nabla_{e_i} e_i, e_j\rangle=0,
\end{equation}
while the right-hand side also vanishes since $R^{\small \s^n}(e_i,\eta)e_i$ is normal to the hypersurface. Observe that the second equality in \eqref{eq:new-proof} is true since, in the case of the sphere, geodesics are lines of curvature (see also the computations in \cite{IM}).\\
Now, we need to consider the case of $\mathbb{S}^p[1/\sqrt{2}\,]\times\mathbb{S}^{n-p-1}[1/\sqrt{2}\,]$ with $p\neq n-p-1$. For these generalized Clifford tori, by a direct computation, we show that all geodesics are totally biharmonic curves in $\mathbb{S}^n[1]$. If the geodesic lies fully in one of the factors we are done, since it will also be a geodesic of $\mathbb{S}^{n-1}[1/\sqrt{2}\,]$, which we have just proved to be totally biharmonic. Hence, consider any geodesic, $\gamma$, of $\mathbb{S}^p[1/\sqrt{2}\,]\times\mathbb{S}^{n-p-1}[1/\sqrt{2}\,]\subset \r^{p+1}\times\r^{n-p}$ that does not lie fully in any of the factors. Then, there exist geodesics $\alpha$ and $\beta$ in each factor such that $\gamma(s)=\left(\alpha(a s),\beta(b s)\right)$, where $a$, $b$ are suitable constants satisfying $a^2+b^2=1$, and $s$ represents the arc-length parameter of $\gamma$. Since $\alpha$ and $\beta$ are geodesics in some sphere of radius $1/\sqrt{2}$, i.e. great circles, they can be parametrized, respectively, by
\begin{eqnarray*}
\alpha\left(as\right)&=&\cos\left(\sqrt{2} a s\right) v_1+\sin\left(\sqrt{2} a s\right) v_2\,,\\
\beta\left(bs\right)&=&\cos\left(\sqrt{2} b s\right) v_3+\sin\left(\sqrt{2} b s \right) v_4\,,
\end{eqnarray*} 
where $v_{i}$, $i=1,2,3,4$ are orthogonal constant vectors ($v_1,v_2\in \r^{p+1}$, while $v_3,v_4\in\r^{n-p}$) satisfying $\lVert v_{i}\rVert^2=1/2$. Therefore, the geodesic $\gamma$ is given by
$$\gamma(s)=\cos\left(\sqrt{2} a s\right) v_1+\sin\left(\sqrt{2} a s\right) v_2+\cos\left(\sqrt{2} b s\right) v_3+\sin\left(\sqrt{2} b s \right) v_4\,,$$
which belongs to the family of biharmonic curves in $\mathbb{S}^n[1]$, described in Proposition 4.4 of \cite{nuevo}. This proves that  $\mathbb{S}^p[1/\sqrt{2}\,]\times\mathbb{S}^{n-p-1}[1/\sqrt{2}\,]$ with $p\neq n-p-1$ is totally biharmonic in $\mathbb{S}^n[1]$.\\
Moreover, if the hypersurface is minimal, isoparametric and with at most two distinct principal curvatures, then it is either totally geodesic or the generalized Clifford torus $\mathbb{S}^p[1/\sqrt{2}\,]\times\mathbb{S}^{p}[1/\sqrt{2}\,]$ with $2p=n-1$. A similar argument as above proves that these generalized Clifford tori are totally biharmonic. This finishes the proof. 
\end{proof}

\begin{rem}
As a consequence of the results in this section we can conclude that a hypersurface $M^{n-1}$ into a space form $N^n(\rho)$ is totally biharmonic if and only if it is isoparametric and biharmonic with at most two distinct principal curvatures.
\end{rem}

\section{Totally biharmonic surfaces in 3-dimensional homogeneous spaces}

A Riemannian manifold $(N^n, h)$ is said to be \emph{homogeneous} if for every two points $p$ and $q$ in $N^n$, there exists an isometry of $N^n$ mapping $p$ into $q$. For homogeneous 3-dimensional manifolds ($n=3$) there are three possibilities for the degree of rigidity, since they may have an isometry group of dimension $6$, $4$ or $3$. 
The maximum rigidity, $6$, corresponds to the space forms, $N^3(\rho)$.
%
The homogeneous 3-dimensional spaces with isometry group of dimension four include, amongst its simply connected members, the product spaces $\mathbb{S}^2(\rho)\times\mathbb{R}$ and $\mathbb{H}^2(\rho)\times\mathbb{R}$; the Berger spheres; the Heisenberg group; and, the universal covering of the special linear group $Sl(2,\mathbb{R})$. Cartan in \cite{Ca} showed that all homogeneous 3-manifolds with group of isometries of dimension $4$ can be described by a Bianchi-Cartan-Vranceanu space $N(a,b)$, where $4\,a\neq b^2$, see Subsection \ref{BCV}. Finally, if the dimension of the isometry group is $3$, the homogeneous 3-space is isometric to a general simply connected Lie group with left invariant metric.


In this section we will focus only on totally biharmonic surfaces into the homogeneous 3-manifolds with group of isometries of dimension $4$.

\subsection{Bianchi-Cartan-Vranceanu Spaces}\label{BCV}

BCV spaces (see \cite{Ca,Vr}) are described by  the following two-parameter family of Riemannian metrics
\begin{equation}\label{1.1}
h_{a,b} =\frac{dx^{2} + dy^{2}}{[1 + a(x^{2} + y^{2})]^{2}} +  \left(dz + \frac{b}{2} \frac{ydx - xdy}{[1 + a(x^{2} + y^{2})]}\right)^{2},\quad a,b \in {\mathbb{R}}
\end{equation}
defined on $N^3=\{(x,y,z)\in\mathbb{R}^3\,;\, \lambda_a=1+a\left(x^2+y^2\right)>0\}$. We are going to denote these BCV spaces by $N(a,b)$. We recall that these spaces are the only simply connected homogeneous 3-manifolds
admitting the structure of a Killing submersion, \cite{M}. 

Now, if we consider the orthonormal basis of vector fields given by $\{E_1,E_2,E_3\}$ where
\begin{equation}
E_1=\lambda_a\, \frac{\partial}{\partial x}-\frac{b\, y}{2}\frac{\partial}{\partial z}\, , \quad E_2=\lambda_a\,\frac{\partial}{\partial y}+\frac{b\, x}{2}\frac{\partial}{\partial z}\, ,\quad E_3=\frac{\partial}{\partial z}\, ,\label{orthonormalframe}
\end{equation}
we can write the expressions for the Levi-Civita connection as
\begin{equation}\label{eq:levi-civita-BCV}
\begin{array}{lll}
\nabla_{E_1} E_1=2\,a\, y\, E_2\, , \quad & \nabla_{E_1} E_2=-2\, a\, y\, E_1+\frac{b}{2}E_3\, ,\quad & \nabla_{E_1}E_3=-\frac{b}{2}E_2 \, ,  \\
\nabla_{E_2} E_1=-2\,a\,y\, E_1+\frac{b}{2}E_3\, ,\quad & \nabla_{E_2}E_2=2\,a\, x\, E_1\, ,\quad & \nabla_{E_2}E_3=\frac{b}{2}E_1\, ,  \\
\nabla_{E_3}E_1=-\frac{b}{2}E_2\, , \quad & \nabla_{E_3}E_2= \frac{b}{2}E_1\, ,\quad & \nabla_{E_3}E_3 =0\, . 
\end{array}
\end{equation}
Moreover, the nonzero components of the curvature tensor can be computed, obtaining
\begin{equation}
R_{1212}=4\,a-\frac{3}{4} b^2\, , \quad\quad R_{1313}=R_{2323}= \frac{b^2}{4}\, . \label{R}
\end{equation}

Observe that, from the above expressions of curvature tensor, if $4\,a=b^2$ then $N(a,b)$ represents a 3-dimensional space form. Therefore, from now on, we are going to assume that $4\,a\neq b^2$. In these cases, as mentioned before, the family of metrics \eqref{1.1} includes all three-dimensional homogeneous metrics whose group of isometries has dimension $4$. The classification of these spaces is as follows (see, for instance \cite{M})
\begin{itemize}
\item If $a=0$ and $b\neq 0$, we have that $N(a,b)\cong \mathbb{H}_3$, the \emph{Heisenberg group}.
\item If $a>0$ and $b=0$, $N(a,b)\cong\left(\mathbb{S}^2(4\,a) -\{\infty\}\right) \times \mathbb{R}$.
\item If $a<0$ and $b=0$, $N(a,b)\cong \mathbb{H}^2(4\,a)\times \mathbb{R}$.
\item If $a>0$, $b\neq 0$ and $4\,a\neq b^2$, then $N(a,b)\cong SU(2)- \{\infty\}$.
\item And, finally, if $a<0$ and $b\neq 0$, we have that $N(a,b)\cong \widetilde{Sl}(2,\mathbb{R})$.
\end{itemize}

Moreover, the Lie algebra of the infinitesimal isometries of $N(a,b)$ with $4\,a\neq b^2$ admits the following basis of Killing vector fields
\begin{equation}\label{X4}
\begin{array}{lll}
X_1&=&\left(1-\frac{2\,a\, y^2}{\lambda_a}\right) E_1+\frac{2axy}{\lambda_a} E_2+ \frac{b y}{\lambda_a} E_3\, , \\
X_2&=&\frac{2axy}{\lambda_a} E_1+\left(1-\frac{2ax^2}{\lambda_a}\right) E_2-\frac{bx}{\lambda_a} E_3\, ,\\
X_3&=&-\frac{y}{\lambda_a} E_1+\frac{x}{\lambda_a}E_2-\frac{b\left(x^2+y^2\right)}{2\lambda_a} E_3\, ,\\
X_4&=&E_3\, , 
\end{array}
\end{equation}
where $\{E_i\}$, $i=1,2,3$, is the orthonormal basis introduced in \eqref{orthonormalframe}. 

Then, a surface which stays invariant under the action of any Killing vector field, $\xi$, is called an \emph{invariant surface}. In particular, it can be checked that, the group of isometries of BCV spaces $N(a,b)$ with $4\,a\neq b^2$ contains a one-parameter subgroup isomorphic to $SO(2)$. Surfaces invariant by the action of $SO(2)$ are called {\em rotational surfaces}.

On the other hand, invariant surfaces under the action of the Killing vector field $X_4$, \eqref{X4}, are usually called \emph{Hopf cylinders}. These cylinders can be parametrized as $\mathbf{x}(s,t)=\psi_t(\widetilde{\alpha}(s))$, where $\widetilde{\alpha}(s)$ denotes an arc-length parametrized curve orthogonal to $X_4$ in $N(a,b)$ while $\{\psi_t\,;\, t\in\mathbb{R}\}$ is the one-parameter group of isometries associated to $X_4$. 

We recall that isoparametric surfaces of BCV spaces $N(a,b)$ with $4\,a\neq b^2$ have been classified in \cite{DVM}, where the authors proved that they must be either Hopf cylinders, horizontal slices or parabolic helicoids. 

\subsection{Biharmonic Curves in 3-Spaces}

In order to prove the local classification of totally biharmonic surfaces of BCV spaces $N(a,b)$ with $4\,a\neq b^2$, we are going to argue using directly the definition introduced in \cite{IM}. Therefore, we need to recall some facts of biharmonic curves in 3-spaces. 

Let's denote by $\gamma(s)$ an arc-length parametrized curve immersed in any 3-space $N^3$ and let's put $\gamma'(s)=\mathbf{t}(s)$. If the covariant derivative of the tangent vector field $\mathbf{t}(s)$ along $\gamma$ vanishes, that is $\nabla_\mathbf{t} \,\mathbf{t}(s)=0$, then $\gamma(s)$ is a geodesic curve. 

On the other hand, if $\gamma(s)$ is a unit speed non-geodesic smooth curve into $N^3$, 
then $\gamma(s)$ is a \textit{Frenet curve} of rank $2$ or $3$  and the standard \textit{Frenet frame} along $\gamma(s)$ is given by $\{\mathbf{t},\mathbf{n}, \mathbf{b}\}(s)$, where $\mathbf{n}$ and $\mathbf{b}$ are the \emph{unit normal} and \emph{unit binormal} to the curve, respectively, and $\mathbf{b}$ is chosen so that $\{\mathbf{t},\mathbf{n},\mathbf{b}\}$ is a positive orientated frame. Then the \emph{Frenet equations}
\begin{equation}\label{frenet1}
\left\{\begin{array}{lcl}
\nabla_\mathbf{t}\, \mathbf{t}(s)&=&\kappa(s) \mathbf{n} (s)\, , \\
\nabla_\mathbf{t} \mathbf{n}(s)&=&-\kappa(s) \mathbf{t}(s) +\tau(s) \mathbf{b}(s)\, ,\\
\nabla_\mathbf{t} \mathbf{b}(s)&=& -\tau (s) \mathbf{n} (s)\, ,
\end{array}
\right.
\end{equation}
define the \textit{curvature}, $\kappa(s)$ ($\kappa(s)>0$ if the rank is $3$), and the \textit{torsion}, $\tau(s)$, along $\gamma(s)$ (do not confuse the notation with the tension field $\tau(\varphi)$ defined in \S 2).

Now, substituting the Frenet equations \eqref{frenet1} in \eqref{bcurve}, the system of equations describing proper biharmonic curves can be written in 3-spaces as
\begin{equation}\label{b1}
\left\{
\begin{array}{rcl}
 \kappa&=&\kappa_o\neq 0\,, \\
 \kappa^2+\tau^2&=&K^N(\mathbf{t},\mathbf{n})\, ,\\
 \tau' &=&-\langle R^N(\mathbf{t},\mathbf{n})\mathbf{t},\mathbf{b}\rangle\, ,
\end{array}
\right.
\end{equation}
which represent the tangent, normal and binormal components of $\tau_2(\gamma)$, \eqref{bcurve}, respectively.

Moreover, if the 3-space is a BCV space $N(a,b)$, with $4\,a\neq b^2$, then, using the curvature tensors \eqref{R},  system \eqref{b1} can be rewritten as (see \cite[Section 5.1]{CMOP}  for details)
\begin{eqnarray}
&& \kappa=\kappa_o\neq 0 \, , \label{b11}\\
&& \tau=\tau_o \, , \label{b12}\\
&& \mathbf{n}_3=0 \, , \label{b13}\\
&& \kappa^2+\tau^2=\frac{b^2}{4}-\left(b^2-4\,a\right)\mathbf{b}_3^2\, , \label{b14}
\end{eqnarray}
where $\mathbf{n}_3=\langle \mathbf{n}, E_3\rangle$ and $\mathbf{b}_3=\langle \mathbf{b}, E_3\rangle$.

\subsection{Classification of totally biharmonic surfaces into BCV Spaces}

By using the equations of proper biharmonic curves in BCV spaces $N(a,b)$, $4\,a\neq b^2$, given by \eqref{b11}-\eqref{b14}, and using the definition of totally biharmonic surfaces, we have the following first characterization.

\begin{thm}\label{hopf} Let $M^2$ be a non-totally geodesic surface into a Bianchi-Cartan-Vranceanu space $N(a,b)$ with $4\,a\neq b^2$. If $M^2$ is a totally biharmonic surface, then $a>0$ and $M^2$ is a rotational Hopf cylinder.
\end{thm}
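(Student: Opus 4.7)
My plan is to apply the characterization of proper biharmonic curves in BCV spaces given by \eqref{b11}--\eqref{b14} to geodesics of $M^2$, extracting increasingly rigid constraints on the surface. First, I would show that $M^2$ is a Hopf cylinder. Since $M^2$ is not totally geodesic, on an open dense subset the shape operator $S_\eta$ admits a principal direction $X$ with nonzero eigenvalue $\lambda$. The $M^2$-geodesic $\gamma$ starting in direction $X$ has $\nabla^N_{\gamma'}\gamma'(0)=\lambda\,\eta$, so its Frenet curvature $\kappa(0)=|\lambda|$ is nonzero and $\gamma$ is a proper biharmonic curve of $N(a,b)$ satisfying \eqref{b11}--\eqref{b14}. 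Because the Frenet normal of any $M^2$-geodesic agrees with $\pm\eta$, equation \eqref{b13} immediately gives $\eta_3=\langle\eta,E_3\rangle=0$ at the base point; by density and continuity $\eta_3\equiv 0$ on $M^2$, so the Killing field $X_4=E_3$ is tangent to $M^2$ everywhere. Hence $M^2$ is invariant under the associated one-parameter group of isometries and therefore is a Hopf cylinder, parametrized as $\mathbf{x}(s,t)=\psi_t(\widetilde\alpha(s))$ with $\widetilde\alpha$ a horizontal arc-length curve.

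Next, I would force $a>0$ by applying the biharmonic system to $\widetilde\alpha$. In the tangent orthonormal frame $\{T,E_3\}$ with $T=\widetilde\alpha'$, take $\eta$ as the unit horizontal normal to $M^2$ obtained from $T$ by a $\pi/2$-rotation in the horizontal distribution. The identity $\nabla_{E_3}E_3=0$ gives $\langle S_\eta(E_3),E_3\rangle=0$, while the Killing skew-symmetry of $\nabla E_3$ gives $\langle\nabla^N_T T, E_3\rangle=0$, so $\widetilde\alpha$ is itself a geodesic of $M^2$. A short computation with \eqref{eq:levi-civita-BCV} yields the off-diagonal entry $\langle S_\eta(T),E_3\rangle=\pm b/2$. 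Applying the biharmonic system to $\widetilde\alpha$ (assuming $k:=\langle S_\eta(T),T\rangle\neq 0$; the case $k\equiv 0$ reduces to an analogous computation along a principal-direction geodesic at angle $\pi/4$), for which $\mathbf{t}=T$, $\mathbf{n}=\pm\eta$ and hence $\mathbf{b}=\pm E_3$ so that $\mathbf{b}_3^2=1$, equation \eqref{b14} becomes $\kappa^2+\tau^2=4a-3b^2/4$, while the Frenet equations yield $\tau^2=b^2/4$. Subtracting, $\kappa^2=4a-b^2\geq 0$; combined with $4a\neq b^2$ this gives $4a>b^2$ and hence $a>0$.

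Finally, for the rotational character: the equality $\kappa^2=4a-b^2$ forces $|k|$ to be constant on $M^2$, and by continuity $k$ itself is constant, so $\widetilde\alpha$ has constant curvature and torsion in $N(a,b)$. Because $\widetilde\alpha$ is horizontal and $\nabla^N_T T$ is horizontal, the Killing submersion $\pi:N(a,b)\to\mathcal{B}$ onto the base $\mathcal{B}$ of constant curvature $4a>0$ sends $\widetilde\alpha$ isometrically to a curve of constant geodesic curvature in $\mathcal{B}$, that is, a circle. Such a circle is preserved by an $SO(2)$-subgroup of isometries of $\mathcal{B}$ which, by the homogeneity of $N(a,b)$, can be arranged (up to an ambient isometry) to be the one-parameter subgroup generated by $X_3$; this subgroup then preserves $M^2=\pi^{-1}(\pi(\widetilde\alpha))$. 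Therefore $M^2$ is a rotational Hopf cylinder. I expect the main obstacle to be the first step: equation \eqref{b13} applies only to \emph{proper} biharmonic curves, so one must carefully use the non-totally-geodesic hypothesis to find, at each point, a geodesic of $M^2$ with nonzero ambient Frenet curvature and then propagate $\eta_3=0$ by density.
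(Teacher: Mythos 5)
Your overall strategy is the same as the paper's: use \eqref{b13} along proper biharmonic $M^2$-geodesics to force $E_3$ tangent (hence a Hopf cylinder), then extract $\kappa_g^2=4a-b^2>0$ and conclude the base curve is a circle. The second half of your argument is in fact a clean variant of the paper's: where the paper first proves the principal curvatures are constant (via $\det S_\eta=b^2/4$ and \eqref{s1}) and then uses the traced form of \eqref{s2}, namely $\lvert S_\eta\rvert^2=\Ricci(\eta,\eta)$, you apply \eqref{b14} directly to the horizontal geodesic $\widetilde\alpha$ (with $\mathbf{b}_3^2=1$, $\tau^2=b^2/4$); both routes give the same identity $\kappa_g^2=4a-b^2$ and hence $a>0$ and the rotational conclusion.

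The genuine gap is exactly the one you flag: your first step asserts that $S_\eta\neq 0$ on an open \emph{dense} subset, but ``non-totally geodesic'' alone only gives a nonempty open set where $S_\eta\neq 0$. Without density, $\eta_3=0$ does not propagate: a priori $M^2$ could contain an open totally geodesic patch (think of a piece of a slice $\mathbb{S}^2(4a)\times\{t_0\}$, where $\eta=E_3$ and $\eta_3=1$) glued to a non-totally-geodesic region, and continuity of $\eta_3$ says nothing there, so the Hopf-cylinder structure is not established on all of $M^2$. The paper closes this with Proposition~\ref{condition}: for a totally biharmonic hypersurface the alternative is global, so if $M^2$ is not totally geodesic then \eqref{s1}--\eqref{s3} hold at every point, in particular $S_\eta$ never vanishes and through \emph{every} point there is a geodesic of $M^2$ that is proper biharmonic in $N(a,b)$; no density argument is needed. (Alternatively one can rule out a totally geodesic patch by hand, using that the normal curvature is constant along every $M^2$-geodesic, but some such argument must be supplied.) A second, smaller, loose end is the case $k=\langle S_\eta(T),T\rangle\equiv 0$ with $b\neq 0$, which you defer to ``an analogous computation'': it does close (the geodesic at angle $\pi/4$ has $\kappa^2=b^2/4$, torsion $0$ and $\mathbf{b}_3^2=1/2$, so \eqref{b14} forces $4a=b^2$, a contradiction), but as written this is only asserted; note also that $k$ is constant on the cylinder (it is the constant curvature of the biharmonic geodesic $\widetilde\alpha$, and is $\psi_t$-invariant), which is what legitimizes splitting into the two cases $k\neq0$ and $k\equiv0$. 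The paper's use of $\lvert S_\eta\rvert^2=\Ricci(\eta,\eta)$ sidesteps this dichotomy altogether.
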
 
\begin{proof} Fix any point $p$ in a surface $M^2$ of a BCV space $N(a,b)$ with $4\,a\neq b^2$ and denote by $\eta$ the unit normal vector field to $M^2$. Since $M^2$ is not totally geodesic, there exists a geodesic $\gamma$ passing through $p$ which is not a geodesic of the ambient space $N(a,b)$. \\ 
Consider $\{\mathbf{t},\mathbf{n},\mathbf{b}\}$ the associated Frenet frame of $\gamma$ in $N(a,b)$. Now, since $\gamma$ is a geodesic of the surface $\mathbf{n}$ must be parallel to the unit normal to the surface. Thus, up to orientation, we can assume, locally, that $\mathbf{n}=\eta$.\\
Moreover, since $M^2$ is totally biharmonic, $\gamma$ must be proper biharmonic as a curve in the ambient space $N(a,b)$. Thus, equation \eqref{b13} must be verified, that is, we have that $$\mathbf{n}_3=\langle \mathbf{n}, E_3\rangle=\langle \eta, E_3\rangle=0.$$ Therefore, at the point $p$, $E_3$ is tangent to the surface.\\
Next, arguing in the same way for any point $p\in M^2$, we conclude that $E_3$ is always tangent to $M^2$.\\
We now consider the following adapted orthonormal frame $\{\hat{E}_1,\hat{E}_2,\hat{E}_3=\eta\}$ given by
\begin{equation}\label{eq:ecap}
\begin{cases}
\hat{E}_1=\cos\vartheta\, E_1+\sin\vartheta \, E_2\,,\\
\hat{E}_2=E_3\,, \\
\hat{E}_3=\sin\vartheta\,E_1-\cos\vartheta \, E_2\, , 
\end{cases}
\end{equation}
for some function $\vartheta(p)$. After a long straightforward computation, taking into account the formulae of the Levi-Civita connection \eqref{eq:levi-civita-BCV},  we can compute, with respect to this adapted orthonormal frame, the second fundamental form of $M^2$, $II$. In particular, from the symmetry of $II$, we get the following relation
\begin{equation}
II(\hat{E}_1,\hat{E}_2)=-\langle \nabla_{\hat{E}_1}\hat{E}_3, \hat{E}_2\rangle=\frac{b}{2}=II(\hat{E}_2,\hat{E}_1)=-\langle \nabla_{\hat{E}_2}\hat{E}_3, \hat{E}_1\rangle=\frac{b}{2}-\hat{E}_2(\vartheta)\, ,\nonumber
\end{equation}
which implies that $\vartheta$ is a constant function along the fibers of the Killing submersion, $\pi$, associated to $\hat{E}_2=E_3$. Therefore, after a reparametrization if needed, $M^2$ can be seen as a Hopf cylinder. We assume that $M^2$ is locally parametrized as $\mathbf{x}(s,t)=\psi_t(\widetilde{\alpha}(s))$, where $\{\psi_t\,;\,t\in\mathbb{R}\}$ denotes the one-parameter group of isometries associated to $E_3$ and $\widetilde{\alpha}(s)$ is an arc-length parametrized curve in $N(a,b)$ everywhere orthogonal to $E_3$.\\
Now, using formula (16) of \cite{GM}, we have that the extrinsic Gaussian curvature of $M^2$, $K_e$, verifies
\begin{equation}\label{eq:Ke}
 K_e=\det \left(S_\eta\right)=\lambda_1\lambda_2=\frac{b^2}{4}\,,
 \end{equation}
where $\lambda_i$, $i=1,2$, are the principal curvatures of $M^2$. Then, using equation \eqref{s1}, we see that for a principal direction $e_i$, $i=1,2$, the associated principal curvature, $\lambda_i$, is constant in the direction of $e_i$ (see also \cite{IM}). Moreover, differentiating \eqref{eq:Ke}  in the direction of $e_1$ we have that
$$e_1\left(K_e\right)=\lambda_1 e_1\left(\lambda_2\right)=e_1\left(\frac{b^2}{4}\right)=0\,.$$
Thus, since $\lambda_1\neq 0$ from equation \eqref{s1}, we conclude that $\lambda_2$ is also constant in the direction of $e_1$. Repeating the same argument for $e_2$ we get that the principal curvatures are constant in a neighborhood of any fixed point $p\in M^2$. Therefore, $M^2$ has constant principal curvatures (and, then constant mean curvature). Moreover, recall that $M^2$ is also totally biharmonic. That is, the shape operator of $M^2$, $S_\eta$ with $\eta=\hat{E}_3$, must verify $\lvert S_\eta\rvert^2=\Ricci\left(\eta,\eta\right)$. Thus, together with equation (16) of \cite{GM}, we obtain
\begin{equation}
\lvert S_{\eta} \rvert^2=\kappa_g^2+\frac{b^2}{2}=\Ricci (\hat{E}_3,\hat{E}_3)=4\,a-\frac{b^2}{2}\, ,\nonumber
\end{equation}
where $\kappa_g$ denotes the geodesic curvature of the curve $\alpha(s)=\pi(\widetilde{\alpha}(s))$ in the base space of the Riemannian submersion (see \cite{GM} or \cite{M} for details). Moreover, $\kappa_g^2=4\,a-b^2> 0$, thus, the curve $\alpha(s)$ has constant curvature in a surface with constant Gaussian curvature $4\,a>b^2\geq 0$, that is, in a 2-dimensional round sphere $\mathbb{S}^2(4\,a)$.\\
In conclusion we have that the curve $\alpha(s)$ represents a circle and, as a consequence, the Hopf cylinder $M^2$  is invariant under the action of the group $SO(2)$, that is, $M^2$ is a rotational Hopf cylinder.
\end{proof}

As a consequence  of Theorem~\ref{hopf}, in order to classify the totally biharmonic surfaces in BCV spaces, we just need to find the rotational Hopf cylinders such that all their geodesics are biharmonic in the BCV ambient space. Thus, from now on, we are going to assume that $M^2$ is a $SO(2)$-invariant Hopf cylinder of a BCV space $N(a,b)$ with $4\,a\neq b^2$. Using the adapted orthonormal frame introduced in \eqref{eq:ecap}, $\{\hat{E}_1,\hat{E}_2,\hat{E}_3\}$, we have that the unit tangent vector field of any arc-length parametrized curve immersed in $M^2$, $\gamma(s)$, can be written as
\begin{equation}
\mathbf{t}(s)=\gamma'(s)=\sin\omega\, \hat{E}_1+\cos\omega\,\hat{E}_2\, , \nonumber
\end{equation}
for some function $\omega$. That is, the tangent vector field of any curve $\gamma$ of $M^2$, seen as a curve in the ambient  space $N(a,b)$, can be described in terms of the orthonormal frame $\{E_1,E_2,E_3\}$ as follows
\begin{equation}
\mathbf{t}=\sin\omega\,\cos\vartheta\,E_1+\sin\omega\,\sin\vartheta\,E_2+\cos\omega\,E_3\, , \label{tangent}
\end{equation}
where  $\vartheta$ stays constant along the integral curves of $E_3$.

Moreover, if we also assume that $\gamma$ is a geodesic in $M^2$ we have that $\omega$ must be constant. In fact, from \eqref{tangent} and taking into account \eqref{eq:levi-civita-BCV}, we can compute $\nabla_\mathbf{t}E_3$, obtaining 
\begin{equation}
\langle \mathbf{t},\nabla_\mathbf{t} E_3\rangle=\langle \mathbf{t},\sin\omega\,\cos\vartheta\,\nabla_{E_1}E_3+\sin\omega\,\sin\vartheta\,\nabla_{E_2}E_3\rangle=\frac{b}{2}\sin\omega\,\langle \mathbf{t}, \hat{E}_3\rangle=0\, . \nonumber
\end{equation} 
Thus, we have
\begin{equation}
\frac{d}{ds}\left(\cos\omega\right)=\mathbf{t}\langle\mathbf{t},E_3\rangle=\langle\nabla_\mathbf{t}\,\mathbf{t},E_3\rangle+\langle\mathbf{t},\nabla_\mathbf{t} E_3\rangle=\langle II(\mathbf{t},\mathbf{t})\,\hat{E}_3, E_3\rangle=0\, ,\nonumber
\end{equation}
where we have used the Gauss formula and that  $\gamma$ is a geodesic in $M^2$ to get $\nabla_\mathbf{t}\,\mathbf{t}=II(\mathbf{t},\mathbf{t})\,\hat{E}_3$. That is, $\cos\omega$ is constant along $\gamma$, which means that $\omega$ is a constant function. In other words, geodesics of a rotational Hopf cylinder are helices making a constant angle with the vertical direction $\hat{E}_2=E_3$.

Observe that these helices in BCV spaces $N(a,b)$ can be parametrized by  arc-length as follows. For any real constants $r$ and $\mu$, take the curve
\begin{equation}
\gamma(s)=\left(r\sin\lambda\, s,-r\cos\lambda\,s, \lambda\,\mu\,s  \right)\, .\label{par}
\end{equation}
Now, the arc-length condition implies that the function $\lambda$ must satisfies 
\begin{equation}
\lambda=\frac{\lambda_a}{\sqrt{r^2+\left(\frac{b}{2}\,r^2-\mu\,\lambda_a\right)^2\,}}\, .\label{lambdaa}
\end{equation} 
We are going to call the constant $r$ the \emph{Euclidean radius} of the helix $\gamma$ by  analogy with the Euclidean space $\mathbb{R}^3$.

It turns out that,  combining the derivative of the parametrization \eqref{par} of any of these helices with the expression of the tangent vector field given in \eqref{tangent} and some manipulations, the following relations between the constant function $\omega$ and the parameters $r$ and $\mu$ defining the helix $\gamma$ are valid
\begin{equation}
\sin\omega=\frac{r}{\sqrt{r^2+\left(\frac{b}{2}\,r^2-\mu\,\lambda_a\right)^2\,}}\,, \quad\quad \cos\omega=\frac{\frac{b}{2}\,r^2-\mu\,\lambda_a}{\sqrt{r^2+\left(\frac{b}{2}\,r^2-\mu\,\lambda_a\right)^2\,}}\, . \label{sinomega}
\end{equation}

We thus obtain the following result

\begin{prop}\label{Hr} A rotational Hopf cylinder into $N(a,b)$, $\rho=4\,a\neq b^2$, is a totally biharmonic surface if and only if $N(a,b)\cong \mathbb{S}^2(\rho)\times \mathbb{R}$ and the Euclidean radius of all the helices $\gamma$ of the Hopf cylinder is given by $$r^2=\frac{3\pm2\sqrt{2}}{a}.$$
\end{prop}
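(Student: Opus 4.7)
The plan is to exploit the fact that, on a rotational Hopf cylinder, every geodesic is a helix making a constant angle $\omega$ with the Killing direction $E_3$, and to impose the biharmonic system \eqref{b11}--\eqref{b14} on every such helix simultaneously. Doing this turns the totally-biharmonic condition into an identity in the trigonometric functions of $\omega$, whose coefficients depend only on $r$, $a$ and $b$; these coefficients are then forced to vanish independently.

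First I would use the Gauss formula together with the fact that $\gamma$ is a geodesic of $M^2$ to get $\nabla_{\mathbf t}\mathbf t = II(\mathbf t,\mathbf t)\,\hat E_3$. This identifies the Frenet principal normal of $\gamma$ as $\mathbf n=\pm\hat E_3$, gives $\kappa=|II(\mathbf t,\mathbf t)|$, and makes $\mathbf n_3=0$ automatic, so \eqref{b13} holds at once. I would then compute the three entries of $II$ in the adapted frame $\{\hat E_1,E_3\}$ from \eqref{eq:levi-civita-BCV}: the mixed entry is $II(\hat E_1,E_3)=b/2$ as in Theorem~\ref{hopf}; a direct check using $\nabla_{E_3}\hat E_3\propto \hat E_1$ gives $II(E_3,E_3)=0$; and $II(\hat E_1,\hat E_1)$ equals the geodesic curvature $\kappa_g$ of the projected circle $\alpha=\pi\circ\widetilde\alpha$ in the base of the Killing submersion, which is an explicit function of $r$, $a$, $b$ via formula (16) of~\cite{GM}. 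Writing $\mathbf t=\sin\omega\,\hat E_1+\cos\omega\,E_3$ then yields
\[
\kappa(\omega)=\bigl|\kappa_g\sin^2\omega+b\sin\omega\cos\omega\bigr|,
\]
which, being independent of $s$, verifies \eqref{b11}.

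Next I would extract the torsion and the binormal. Differentiating $\mathbf n=\pm\hat E_3$ along $\mathbf t$ with \eqref{eq:levi-civita-BCV}, and using $\mathbf b=\mathbf t\times\mathbf n$ together with \eqref{eq:ecap}, one obtains $\tau(\omega)$ and $\mathbf b_3(\omega)=\langle \mathbf b,E_3\rangle$ as explicit trigonometric polynomials in $\sin\omega,\cos\omega$; both are constants in arc-length, so \eqref{b12} is also verified. The substantive step is then \eqref{b14}: since $M^2$ is totally biharmonic iff every $\omega$ produces a biharmonic $\gamma$, the identity
\[
\kappa(\omega)^2+\tau(\omega)^2=\tfrac{b^2}{4}-(b^2-4a)\,\mathbf b_3(\omega)^2
\]
must hold as a trigonometric polynomial in $\omega$. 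Matching coefficients of the independent monomials in $\sin\omega,\cos\omega$, one relation forces $b=0$, and the remaining ones collapse to a single quadratic in $r^2$ with roots $r^2=(3\pm 2\sqrt 2)/a$. With $a>0$ from Theorem~\ref{hopf}, $b=0$ locates us in $N(a,0)\cong \mathbb S^2(4a)\times\mathbb R=\mathbb S^2(\rho)\times\mathbb R$, and the converse direction reduces to a direct substitution check. The main difficulty to anticipate is bookkeeping: the expressions for $\tau(\omega)$ and $\mathbf b_3(\omega)$ arising from~\eqref{eq:levi-civita-BCV} involve several products of $\sin\omega$ and $\cos\omega$, and they must be organised carefully so that the coefficient-matching step in $\omega$ cleanly isolates $b$ first and then $r$.
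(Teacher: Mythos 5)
Your proposal is correct and ends at the same place as the paper, but it organizes the decisive computation differently, so a comparison is worthwhile. The paper works extrinsically: it parametrizes the geodesics explicitly as the helices \eqref{par}, imports $\kappa$, $\tau$ and $\mathbf b_3$ from \cite{CMOP} through the auxiliary quantities \eqref{lambdaa}--\eqref{sinomega}, and writes condition \eqref{b14} as the polynomial relation \eqref{eq:rr} in the fixed radius $r$ with the pitch $\mu$ labelling the geodesic; ``totally biharmonic'' then means \eqref{eq:rr} holds for every $\mu$, which forces $b=0$ and the quadratic $a^2r^4-6ar^2+1=0$. You instead stay intrinsic: you label geodesics by the constant angle $\omega$, read $\kappa$ off as the normal curvature $\lvert II(\mathbf t,\mathbf t)\rvert$ and $\tau$ as the geodesic torsion in the adapted frame \eqref{eq:ecap}, using $II(\hat E_1,\hat E_1)=\kappa_g$, $II(\hat E_1,E_3)=b/2$, $II(E_3,E_3)=0$, and impose \eqref{b14} as an identity in $\omega$. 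This works, and is in fact cleaner than you anticipate: with $\mathbf b_3=\pm\sin\omega$ the left-hand side of \eqref{b14} collapses to $\kappa_g^2\sin^2\omega+b\,\kappa_g\sin\omega\cos\omega+\tfrac{b^2}{4}$, so the identity reduces to $\left(\kappa_g^2+b^2-4a\right)\sin\omega+b\,\kappa_g\cos\omega=0$ for all $\omega$, i.e. $b\,\kappa_g=0$ and $\kappa_g^2=4a-b^2$. One caveat in your coefficient matching: the first relation gives $b=0$ \emph{or} $\kappa_g=0$, and the branch $\kappa_g=0$ must be discarded because it would force $4a=b^2$, which is excluded; this is the exact analogue, in your bookkeeping, of the case $1-a^2r^4=0$ hidden in the paper's ``$r$ independent of $\mu$'' step. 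After that, $b=0$ and $\kappa_g^2=4a>0$ give $a>0$, and $\kappa_g=\pm(1-ar^2)/r$ (cf. Remark~\ref{rm-last}) yields $a^2r^4-6ar^2+1=0$, i.e. $r^2=(3\pm2\sqrt2)/a$, in agreement with \eqref{r}; sign ambiguities in $\kappa$, $\tau$, $\mathbf b_3$ are harmless since only their squares (and the product $b\,\kappa_g$) enter. What your route buys is the elimination of the auxiliary parameters $\lambda$ and $\mu$ and of the explicit parametrization; what the paper's route buys is that explicit parametrization itself, which it reuses when identifying the resulting cylinders in Remark~\ref{rm-last}.
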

\begin{proof} Let $M^2$ be a rotational Hopf cylinder of $N(a,b)$ with $4\,a\neq b^2$. As mentioned before, geodesics of $M^2$ seen as curves in $N(a,b)$ are the helices $\gamma$ parametrized by \eqref{par} and whose tangent vector field $\mathbf{t}$ is given in \eqref{tangent}. Both formulas are related due to the equations \eqref{lambdaa} and \eqref{sinomega}.\\
First of all, if $\gamma$ happens to be also a geodesic in the ambient space $N(a,b)$ we are done. This is the case, for instance, for the integral curves of $E_3$. Lets us assume that $\gamma$ is not a geodesic of the ambient space and, therefore, along $\gamma$ there is a well-defined Frenet frame $\{\mathbf{t},\mathbf{n},\mathbf{b}\}$. Then,  computing the covariant derivative of the tangent vector field $\mathbf{t}$, \eqref{tangent}, we  obtain the following explicit expression of the curvature of $\gamma$, $\kappa$, as a curve in $N(a,b)$ (see \cite[Lemma 5.5]{CMOP} for the explicit computation):
\begin{equation}
\kappa=\varpi\,\sin\omega\, ,\label{kappa}
\end{equation}
where $\varpi$ is a constant given by
\begin{equation}
\varpi=\lambda-2\,a\,r\,\sin\omega-b\,\cos\omega  \, ,\nonumber
\end{equation} 
while  $\lambda$, $\sin\omega$ and $\cos\omega$ are given in \eqref{lambdaa} and \eqref{sinomega}, respectively.\\
Moreover, from this computation and the constant value of $\omega$ proved before, we easily obtain that $\mathbf{n}_3=\langle \mathbf{n},E_3\rangle=0$, $\mathbf{n}$ being the Frenet normal to $\gamma$. In particular, this means that $\mathbf{b}_3=\langle \mathbf{b},E_3\rangle=\sin\omega$, since the Frenet binormal vector field can be defined as $\mathbf{b}=\mathbf{t}\times\mathbf{n}$, where here $\times$ denotes the cross product.\\
Then, arguing as in Theorem 5.6 of \cite{CMOP} we can also compute the torsion $\tau$ of $\gamma$ obtaining that
\begin{equation}
\tau=-\varpi\,\cos\omega-\frac{b}{2}\, . \label{tau}
\end{equation}
Now, the curve $\gamma$ in $N(a,b)$, $4\,a\neq b^2$, is a proper biharmonic curve if equations \eqref{b11}-\eqref{b14} are verified. Equation \eqref{b11} trivially holds because both $\varpi$ and $\omega$ are constants and $\gamma$ is non-geodesic in $N(a,b)$. Moreover, the same argument works to prove that equation \eqref{b12} is verified. Equation \eqref{b13} is also true since, as argued before, $\mathbf{n}_3=0$. Thus, in order to check if $\gamma$ is a proper biharmonic curve in $N(a,b)$, we just need to verify equation \eqref{b14}.\\
Combining \eqref{b14} with the values of the curvature of $\gamma$, \eqref{kappa}, the torsion, \eqref{tau}, and with $\mathbf{b_3}=\sin\omega$ we get, after some simplifications,
\begin{equation}\label{eq:rr}
a\left(2a\left[1-\mu b\right]+b^2\right)r^4+\left(b^2-12a\right)r^2+2\left(1+\mu b\right)=0\,. 
\end{equation}
Notice that, in order to have that all geodesics $\gamma$, \eqref{par}, of $M^2$ are proper biharmonic helices in the ambient BCV space, $N(a,b)$, we need that the radius $r$, which is a solution of \eqref{eq:rr},  must be independent of the parameter $\mu$. This only happens when $b=0$, that is, when $N(a,b)=N(a,0)$. Moreover, for $b=0$ there are exactly two positive solutions of \eqref{eq:rr}, given by
\begin{equation}
r^2=\frac{3\pm2\sqrt{2}}{a}\, \label{r}
\end{equation}
provided that $a>0$. In other words, the rotational Hopf cylinder $M^2$ is totally biharmonic if and only if  $N(a,b)$ is congruent to $\mathbb{S}^2(4\,a)\times\mathbb{R}$ and $r$ is one of the two possible positive radius given in \eqref{r}.
\end{proof}

\begin{rem}\label{rm-last}
In principle,  Proposition~\ref{Hr} shows the existence of two non-totally geodesic surfaces $M^2$ which are totally biharmonic in the BCV space $N(a,0)\cong \mathbb{S}^2(\rho)\times\mathbb{R}$ where $\rho=4\,a$. These surfaces are rotational Hopf cylinders and they are completely described by the positive radius, $r>0$, given in \eqref{r}. Therefore, for notation consistence we are going to denote by $M^2_+$ (and, $M^2_-$) the associated surface with $r_+$ (and $r_-$, respectively) determined by the choice of sign in \eqref{r}.

Now, it turns out that both surfaces are essentially the same, in the sense that they are congruent to each other. Moreover, they are exactly the same up to a change of orientation. In order to prove this, consider the parametrization $\mathbf{x}(s,t)=\psi_t(\widetilde{\alpha}(s))$ of the Hopf cylinder where $\{\psi_t\,;\,t\in\mathbb{R}\}$ denotes the one-parameter group of isometries associated to $E_3$ and $\widetilde{\alpha}(s)$ is an arc-length parametrized curve in $N(a,b)$ everywhere orthogonal to $E_3$. We can easily compute the first fundamental form of $M^2_+$ and $M^2_-$, verifying that both of them are the same.
Moreover, a straightforward computation of the coefficients of the second fundamental form gives
\begin{equation}
II_{11}=\frac{1-a\,r^2_{\pm}}{r_{\pm}}\, , \quad\quad II_{12}=II_{22}=0\, . \nonumber
\end{equation}
Thus, after substituting the values of $r_{\pm}$ given in \eqref{r}, we have that $II_{11}^+=-2\sqrt{a}=-II_{11}^-$, which means that both surfaces, $M_+^2$ and $M^2_-$, are the same up to a change of orientation.
\end{rem}

Thus combining Proposition~\ref{Hr} and Remark~\ref{rm-last} we have  the following result

\begin{thm}\label{clasBCV} A totally biharmonic surface into $N(a,b)$, with $\rho=4\,a\neq b^2$, is either a part of a totally geodesic surface or a part of the cylinder $\mathbb{S}^1(\sqrt{2\,\rho}\,)\times\mathbb{R}$ where $N(a,b)\cong\mathbb{S}^2\left(\rho\right)\times\mathbb{R}$.
\end{thm}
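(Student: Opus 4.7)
The proof will be essentially a direct assembly of the three preceding results. The totally geodesic case is immediate from the statement, so the heart of the argument lies in handling a non-totally-geodesic totally biharmonic surface $M^2$ in $N(a,b)$ with $4a\neq b^2$.

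First I would invoke Theorem~\ref{hopf} to reduce to the case where $a>0$ and $M^2$ is a rotational Hopf cylinder. Then I would apply Proposition~\ref{Hr} to such a cylinder, which forces $b=0$ (hence $N(a,b)\cong\mathbb{S}^2(\rho)\times\mathbb{R}$ with $\rho=4a$) and pins the Euclidean radius $r$ of the profile curve to one of the two values satisfying $r^2=(3\pm 2\sqrt{2})/a$. At this stage there are a priori two candidate surfaces $M^2_\pm$ distinguished by the choice of sign in the radius.

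Finally, Remark~\ref{rm-last} eliminates the duplication: the two Hopf cylinders $M^2_+$ and $M^2_-$ share the same first fundamental form and have second fundamental forms differing only by a global sign, so they are congruent after reversing the orientation. Hence, up to congruence, there is a unique non-totally-geodesic totally biharmonic surface in $\mathbb{S}^2(\rho)\times\mathbb{R}$, namely the rotational Hopf cylinder over the appropriate circle in $\mathbb{S}^2(\rho)$, which under the paper's normalization is $\mathbb{S}^1(\sqrt{2\rho}\,)$. I do not expect any real obstacle: the only nontrivial step is the already-established Proposition~\ref{Hr}, where solving the algebraic equation \eqref{eq:rr} in $r^2$ independently of $\mu$ forces $b=0$; the rest is bookkeeping, together with a routine conversion from the Euclidean radius in the BCV coordinate model to the intrinsic radius needed to write the profile circle as $\mathbb{S}^1(\sqrt{2\rho}\,)$.
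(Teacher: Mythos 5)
Your proposal is correct and follows the paper's own argument exactly: the theorem is obtained by combining Theorem~\ref{hopf}, Proposition~\ref{Hr} and Remark~\ref{rm-last}, with only the routine identification of the resulting rotational Hopf cylinder as $\mathbb{S}^1(\sqrt{2\rho}\,)\times\mathbb{R}$ left to check. No gaps.
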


Finally, combining the case $n=3$ of Theorem~\ref{clas} and Theorem~\ref{clasBCV}, we obtain the local classification of totally biharmonic surfaces into BCV spaces given, as announced, in Theorem~\ref{relconj}.

\end{document}